\newtheorem{theorem}{Theorem}
\newtheorem{claim}[theorem]{Claim}
\newtheorem{lemma}[theorem]{Lemma}
\newtheorem{proposition}[theorem]{Proposition}
\theoremstyle{definition}
\newtheorem{definition}[theorem]{Definition}
\theoremstyle{remark}
\numberwithin{theorem}{section}
\numberwithin{equation}{section}
\begin{document}
\title[Directional Lower Porosity]{Directional Lower Porosity}
\author{Gareth Speight}
\email{G.Speight@Warwick.ac.uk}

\thanks{This work was done while the author was a PhD student of David Preiss and supported by EPSRC funding. I thank David Preiss and Ludek Zaj\'{i}\v{c}ek for suggesting the question which led to this paper.}

\begin{abstract}
We investigate differences between upper and lower porosity. In finite dimensional Banach spaces every upper porous set is directionally upper porous. We show the situation is very different for lower porous sets; there exists a lower porous set in $\mathbb{R}^2$ which is not even a countable union of directionally lower porous sets.
\end{abstract}

\maketitle

\section{Introduction}

There are two main types of porosity of sets in metric spaces. A set is upper (lower) porous if for each point of the set there are nearby holes in the set, of radius proportional to their distance away, at arbitrarily small (all sufficiently small) scales. A set is $\sigma$-upper (lower) porous if it is a countable union of upper (lower) porous sets.

Despite the similarity of the definitions, upper and lower porous sets can behave very differently; in any complete metric space with no isolated points, there exists a closed set which is upper porous but not $\sigma$-lower porous (Remark 2.8(ii) \cite{Zaj05}).

Upper porosity is often used to estimate the size of exceptional sets arising in differentiation theory or other areas of classical analysis. Proving an exceptional set is ($\sigma$-)upper porous often gives a stronger result than merely showing it is small in the sense of category or measure. Indeed, in any complete metric space with no isolated points, there exists a closed nowhere dense set which is not $\sigma$-upper porous \cite{Zaj98}. Further, in $\mathbb{R}^{n}$, there exists a closed nowhere dense set of Lebesgue measure zero which is not $\sigma$-upper porous \cite{Zaj88}. 

The investigation of $\sigma$-upper porous sets was started by Dol\v{z}enko \cite{Dol67} in 1967. He proved that particular exceptional sets arising in the theory of cluster sets are $\sigma$-upper porous. One of the first applications of $\sigma$-upper porous sets to differentiation theory was in the study of the symmetric derivative. Suppose $f \colon \mathbb{R} \to \mathbb{R}$ is continuous at a dense set of points. Then, except at a set of points which is $\sigma$-upper porous, the upper symmetric derivative is the maximum of the left upper and right upper Dini derivatives \cite{Beh78}. This then implies the set of points at which $f$ is symmetrically differentiable but not differentiable is $\sigma$-upper porous.

Lower porosity is perhaps best known for its implications for the Hausdorff dimension of a set. For example, it is known that if a set in $\mathbb{R}^{n}$ is lower porous and the size of holes is close to maximal then the Hausdorff dimension of the set can be only slightly above $n-1$ \cite{Mat88}. However, as pointed out by Zaj\'{i}\v{c}ek, lower porosity also has some applications in differentiation theory. For example, suppose $X$ is an Asplund space and $f \colon X \to \mathbb{R}$ is a continuous convex function. Then it follows from results in \cite{Zaj91} that the set of points where $f$ is not Fr\'{e}chet differentiable is $\sigma$-lower porous. 

The survey papers \cite{Zaj05} and \cite{Zaj88} discuss many more applications of upper and lower porosity. Since proving an exceptional set is ($\sigma$-)lower porous, rather than ($\sigma$-)upper porous, gives a stronger result, we would like to understand what properties of upper porosity which are useful in this context remain valid for lower porosity. In particular, we investigate the relation between porosity and directional porosity. 

A set in a Banach space is called directionally upper/lower porous if at each point of the set the corresponding holes lie in a fixed direction. In finite dimensional Banach spaces compactness of the unit sphere implies every upper porous set is directionally upper porous. We show the situation is very different for lower porous sets; there exists a lower porous set in $\mathbb{R}^2$ which is not even a countable union of directionally lower porous sets.

\section{Basic notions}

We now give relevant definitions and establish a result we will use to prove a set is not a countable union of directionally lower porous sets. 

If $x$ is a point in a metric space and $r>0$ we denote by $B(x,r)$ the open ball of centre $x$ and radius $r$.

\begin{definition}\label{def}
Let $M$ be a metric space. We say $P\subset M$ is lower porous at $x \in P$ if there exists $\rho>0$ and $r_{0}>0$ such that for every $0<r<r_{0}$ there exists $y \in M$ with $d(x,y)<r$ such that
\[B(y,\rho r)\cap P = \varnothing.\]

Let $X$ be a Banach space. We say $P \subset X$ is lower porous at $x \in P$ in direction $v \in X$ if the points $y$ in the above definition can be chosen on the line through $x$ in direction $v$.

We say $P$ is (directionally) lower porous if it is lower porous at each of its points (in some direction).

We say a set is $\sigma$-(directionally) lower porous if it is a countable union of (directionally) lower porous sets.
\end{definition}

We will informally refer to the balls $B(y,\rho r)$ in Definition \ref{def} as holes in $P$. Note by replacing `for every $0<r<r_{0}$' in Definition \ref{def} with `there exists $0<r<r_{0}$' one defines the corresponding notions of upper porosity.

While it can be difficult to prove a set is not $\sigma$-(directionally) upper porous the situation for lower porosity is simpler. Variants of the following lemma and proposition are stated in a survey paper by Zaj\'{i}\v{c}ek (Proposition 2.5, Proposition 2.6 \cite{Zaj05}) for lower porous sets in metric spaces. We reformulate them for directionally lower porous sets in separable Banach spaces. The proofs are almost the same.

\begin{lemma}\label{closedcover}
Let $A$ be a $\sigma$-directionally lower porous subset of a separable Banach space $X$. Then $A$ can be covered by countably many closed directionally lower porous sets.
\end{lemma}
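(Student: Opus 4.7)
The plan is to decompose $A$ into countably many pieces on which the porosity parameters and direction are essentially fixed, and then take closures. Write $A = \bigcup_n A_n$ with each $A_n$ directionally lower porous, and for every $x \in A_n$ let $v_x \in S_X$ (normalized), $\rho_x > 0$, and $r_{0,x} > 0$ denote associated porosity witnesses. Using separability of $X$, fix a countable dense subset $\{v_i\}_{i \geq 1} \subset S_X$ and define
\[A_{n,k,i} := \{x \in A_n : \rho_x \geq 2/k,\ r_{0,x} \geq 1/k,\ \|v_x - v_i\| < 1/k\}.\]
These sets form a countable cover of $A$.

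I would next verify that each $A_{n,k,i}$ is lower porous in the \emph{fixed} direction $v_i$ with uniform constants. Given $x \in A_{n,k,i}$ and $0 < r < 1/k$, take the witness $y_x = x + t_x v_x$ with $|t_x| < r$ and $B(y_x, 2r/k) \cap A_n = \varnothing$, and replace it by $y := x + t_x v_i$, which lies on the correct line through $x$ in direction $v_i$. Since $\|y - y_x\| = |t_x|\,\|v_x - v_i\| < r/k$, the inclusion $B(y, r/k) \subset B(y_x, 2r/k)$ gives $B(y, r/k) \cap A_{n,k,i} = \varnothing$. Hence $A_{n,k,i}$ is lower porous in direction $v_i$ with uniform porosity constant $1/k$ and threshold $1/k$.

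Finally I would pass to closures. For $x \in \overline{A_{n,k,i}}$ and $0 < r < 1/k$, pick $x_m \to x$ in $A_{n,k,i}$ and apply porosity at $x_m$ with radius $r' := r/2$ to obtain $y_m = x_m + t_m v_i$ with $|t_m| < r/2$ and $B(y_m, r/(2k)) \cap A_{n,k,i} = \varnothing$. Since the scalars $t_m$ are bounded, Bolzano--Weierstrass yields a subsequence with $t_m \to t$ and $|t| \leq r/2 < r$. Setting $y := x + tv_i$, we see $y$ lies on the correct line at distance less than $r$ from $x$, and for any $\rho_1 < 1/(2k)$ the ball $B(y, \rho_1 r)$ is contained in $B(y_m, r/(2k))$ for all sufficiently large $m$, so it misses $A_{n,k,i}$ and hence its closure. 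Thus $\overline{A_{n,k,i}}$ is directionally lower porous in direction $v_i$, and $\bigcup_{n,k,i} \overline{A_{n,k,i}} \supseteq A$ is the desired countable cover.

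The main obstacle, compared to the standard non-directional closure argument, is that the hole at a limit point must lie on the specific line through $x$ in a prescribed direction, not merely near $x$; the direction cannot be re-chosen in the limit. This is what forces the initial refinement to pre-align directions with a countable dense net on $S_X$, absorbing the directional perturbation into a slight loss in the porosity constant. Once the direction is fixed, the closure step becomes a one-dimensional Bolzano--Weierstrass selection on the scalars $t_m \in \mathbb{R}$, and openness of the relevant balls makes the transition from $A_{n,k,i}$ to $\overline{A_{n,k,i}}$ automatic.
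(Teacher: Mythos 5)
Your proposal is correct and follows essentially the same route as the paper's (much terser) proof: refine the decomposition so that the direction, porosity constant and threshold are uniform on each piece — using a countable dense net of directions and absorbing the directional error into the porosity constant — and then pass to closures via a compactness/approximation argument with a further slight loss in the constant. You have simply supplied the details the paper leaves as "easy to see" and "an approximation argument."
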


\begin{proof}
It is easy to see one can write $A=\bigcup_{n=1}^{\infty} P_{n}$ where each set $P_{n}$ has the following property for some fixed $v_{n} \in X$, $\rho_{n}>0$ and $r_{n}>0$:
\[ \forall \, x \in P_{n}\, \forall \, 0<r<r_{n}\, \exists \, -r<t<r \,:\, B(x+tv_{n},\rho_{n} r)\cap P_{n} = \varnothing.\]
One can show by an approximation argument that the sets $\overline{P_{n}}$ are directionally lower porous (possibly with slightly smaller holes). Since $A\subset\bigcup_{n=1}^{\infty} \overline{P_{n}}$ we see $A$ can be covered by countably many closed directionally lower porous sets.
\end{proof}

\begin{proposition}\label{sufficient}
Let $X$ be a separable Banach space and let $F$ be a closed subset of $X$. Suppose there exists $A \subset F$ which is dense in $F$ such that $F$ is not directionally lower porous (in $X$) at any point of $A$. Then $F$ is not $\sigma$-directionally lower porous.
\end{proposition}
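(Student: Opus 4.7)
The plan is to argue by contradiction using the Baire category theorem together with Lemma \ref{closedcover}. Suppose $F$ is $\sigma$-directionally lower porous. By Lemma \ref{closedcover}, we may write $F = \bigcup_{n=1}^{\infty} F_{n}$ where each $F_{n}$ is closed and directionally lower porous. Since $F$ is a closed subset of a Banach space it is a complete metric space, so by the Baire category theorem some $F_{n_{0}}$ has nonempty interior relative to $F$. That is, there exist an open set $U \subset X$ and an integer $n_{0}$ with $\varnothing \ne F \cap U \subset F_{n_{0}}$.

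Next I would use the hypothesis that $A$ is dense in $F$ to pick a point $a \in A \cap U$. The goal is then to derive a contradiction by showing that $F$ is directionally lower porous at $a$. Let $v \in X$, $\rho > 0$ and $r_{0} > 0$ be the data witnessing that $F_{n_{0}}$ is directionally lower porous at $a$. Since $U$ is open and $a \in U$, I would choose $r_{1} \in (0, r_{0}]$ small enough that $B(a,(1+\rho)r_{1}) \subset U$. Then for every $0 < r < r_{1}$ the hole $B(a+tv,\rho r)$ given by directional lower porosity of $F_{n_{0}}$ at $a$ (with $|t| < r$) is contained in $U$, whence
\[
B(a+tv,\rho r) \cap F \;=\; B(a+tv,\rho r) \cap F \cap U \;\subset\; B(a+tv,\rho r) \cap F_{n_{0}} \;=\; \varnothing.
\]
Thus $F$ is directionally lower porous at $a$ in direction $v$ with constant $\rho$ and scale $r_{1}$, contradicting the hypothesis that $F$ is not directionally lower porous at any point of $A$.

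The two places that require some care are the appeal to Baire category (which needs $F$ to be complete, but this is immediate since $F$ is closed in a Banach space) and the transfer of porosity from $F_{n_{0}}$ to $F$ at the point $a$. The latter is the real content: it works only because $a$ lies in the \emph{interior} of the set on which $F$ and $F_{n_{0}}$ coincide, so shrinking the maximal scale from $r_{0}$ to $r_{1}$ ensures every candidate hole for $F_{n_{0}}$ at $a$ is automatically a hole for $F$. This is the main (small) obstacle; everything else is bookkeeping, and in particular the density assumption on $A$ is used exactly once, to guarantee that the Baire point $a$ can be taken in $A$.
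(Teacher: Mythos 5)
Your proposal is correct and follows essentially the same route as the paper: apply Lemma \ref{closedcover}, use Baire's theorem on the complete space $F$ to find an open $U$ with $\varnothing \neq F\cap U \subset F_{n_0}$, pick $a \in A \cap U$ by density, and transfer the holes of $F_{n_0}$ at $a$ to $F$ by shrinking the scale so that the holes lie inside $U$. Your explicit choice of $r_1$ with $B(a,(1+\rho)r_1)\subset U$ just makes precise the step the paper states as ``on all sufficiently small scales, these holes avoid $F$ also.''
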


\begin{proof}
Suppose $F$ is $\sigma$-directionally lower porous. Then by Lemma \ref{closedcover} we can write $F \subset \bigcup_{n=1}^{\infty} P_{n}$ where each set $P_{n}$ is closed and directionally lower porous. 

Since $F$ is complete, by Baire's theorem one of the sets $P_{N}$ cannot be nowhere dense in $F$. Since the set $P_{N}$ is closed it follows there is an open set $U \subset X$ such that $\varnothing \neq U \cap F \subset P_{N}$. 

Now choose $y \in A \cap U$. Since $P_{N}$ is directionally lower porous it follows that there are holes close to $y$, in a fixed direction and at all sufficiently small scales, which avoid $P_{N}$. Since $U \cap F \subset P_{N}$ it follows, on all sufficiently small scales, these holes avoid $F$ also. This implies $F$ is directionally lower porous at $y$ which contradicts $y \in A$.
\end{proof}

\section{Construction of the lower porous set}

We now construct a set in $\mathbb{R}^2$ which is lower porous but not $\sigma$-directionally lower porous.

Suppose $A \subset \mathbb{R}^{2}$ and $x \in \mathbb{R}^{2}$. Then $\partial A$ denotes the boundary of $A$, $\overline{A}$ denotes the closure of $A$, and $\| x\|$ is the Euclidean norm of $x$. If $A$ is non empty we also define,
\[d(x,A)=\inf \{\|x-y\|:y \in A\}\]
and, for $r>0$,
\[B(A,r)=\{x \in \mathbb{R}^2: d(x,A)<r\}.\]
Let $B(\varnothing,r)=\varnothing$.

If $v \in \mathbb{R}^2$ we denote by $(v_{1},v_{2})$ the coordinates of $v$ and let $v^{\bot}=(-v_{2},v_{1})$ be the anticlockwise rotation of $v$ through a right angle.

The lower porous set will be constructed by removing neighbourhoods of lines in different directions. We now define an appropriate sequence of directions and give the details of the construction.

\begin{definition}\label{sequence}
Fix a sequence $(v_{n})_{n=1}^{\infty} \subset S^{1}=\{v \in \mathbb{R}^2:\|v\|=1\}$ with the following properties:
\begin{itemize}
	\item The set $D=\{v_{n}: n \in \mathbb{N}\}$ is dense in $S^{1}$.
	\item The sequence $(v_{n})_{n=1}^{\infty}$ contains arbitrarily long strings of each of its terms. More precisely, for each $v \in D$ and $N\geq 1$ there exists $n\geq N$ such that
	\[v_{n+1}=v_{n+2}=\ldots =v_{n+N}=v.\]
\end{itemize}
\end{definition}

\begin{definition}\label{set}
For $n \in \mathbb{N}$ and $k \in \mathbb{Z}$ let $L_{n}^{k}$ denote the line in direction $v_{n}$ translated by the vector $(k/2^{6n})v^{\bot}$ from the origin.

We define $H_{0}=\varnothing$ and, for $n \geq 1$,
\[C_{n}=\bigcup_{k=-\infty}^{\infty} L_{n}^{k} \setminus B(H_{n-1},1/2^{6n}),\]
\[H_{n}=H_{n-1} \cup \bigcup_{x \in C_{n}} B(x,1/2^{6(n+1)}).\]
Finally we define
\[H=\bigcup_{n=1}^{\infty} H_{n}\]
and
\[P=\mathbb{R}^2 \setminus H.\]
\end{definition}

Intuitively $P$ corresponds to a construction similar to that of the Cantor set in which the points removed are close to lines which rotate throughout the construction. At each stage of the construction we take care not to remove points too close to those previously removed.

The set $P$ should be lower porous because every point of $P$ will see a nearby line with removed points at every scale. 

At many points $P$ should not be directionally lower porous because, for each direction, there are arbitrarily long sequences of scales in which holes are constructed only centred on well separated lines in that direction.

We now prove formally that $P$ is lower porous but not $\sigma$-directionally lower porous.

\section{The set $P$ is lower porous}

To show $P$ is lower porous we show that, for $n\geq 1$, each point of $P$ is relatively close to $\partial H_{n}$ and then that close to $\partial H_{n}$ there is a large ball which is disjoint from $P$. We now prove several lemmas which make this argument precise.

\begin{lemma}\label{bdry}
The sequence of sets $\partial H_{n}$ is increasing and contained inside $P$. In particular, $P \neq \varnothing$.
\end{lemma}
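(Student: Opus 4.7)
The plan is to prove both assertions by unpacking the definition and using the separation built into the construction: each new layer of holes $C_{n+1}$ is required to stay at distance $\geq 1/2^{6(n+1)}$ from $H_n$, and this margin is comfortably larger than the radius $1/2^{6(n+2)}$ of the balls subsequently attached around $C_{n+1}$. That gap is exactly what lets boundary points of $H_n$ survive into later stages.

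First I would observe, by induction, that every $H_n$ is open: $H_0=\varnothing$ is open, and the inductive step is immediate because $H_n$ is obtained from $H_{n-1}$ by adjoining a family of open balls. Consequently $\partial H_n=\overline{H_n}\setminus H_n$, so a boundary point $x\in\partial H_n$ satisfies $x\notin H_n$ but $d(x,H_n)=0$.

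Next I would prove the inclusion $\partial H_n\subset \partial H_{n+1}$. Let $x\in\partial H_n$. By the definition of $C_{n+1}$ we have $d(y,H_n)\geq 1/2^{6(n+1)}$ for every $y\in C_{n+1}$, and since $d(x,H_n)=0$ the triangle inequality gives $d(x,C_{n+1})\geq 1/2^{6(n+1)}$. As $1/2^{6(n+1)}>1/2^{6(n+2)}$, the point $x$ lies in none of the new balls $B(y,1/2^{6(n+2)})$; combined with $x\notin H_n$ this yields $x\notin H_{n+1}$. On the other hand, $x\in\overline{H_n}\subset\overline{H_{n+1}}$, so $x\in\partial H_{n+1}$ as required.

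Iterating, $\partial H_n\subset\partial H_m$ for all $m\geq n$, and since each $H_m$ is open we have $\partial H_m\cap H_m=\varnothing$; for $m\leq n$, the inclusion $H_m\subset H_n$ combined with $\partial H_n\cap H_n=\varnothing$ gives $\partial H_n\cap H_m=\varnothing$. Hence $\partial H_n\cap H=\varnothing$, i.e.\ $\partial H_n\subset P$. To confirm $P\neq\varnothing$, it suffices to check that $\partial H_1\neq\varnothing$: since $H_1$ is the $1/2^{12}$-neighbourhood of the family of lines $L_1^k$, which are spaced $1/2^6$ apart, $H_1$ is a proper open subset of $\mathbb{R}^2$ and therefore has nonempty boundary. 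No step is truly an obstacle here; the only point that requires care is verifying the separation $d(x,C_{n+1})\geq 1/2^{6(n+1)}$ for $x\in\overline{H_n}$, which is what makes the boundaries nest cleanly.
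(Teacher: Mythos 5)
Your proof is correct and follows essentially the same route as the paper: both arguments hinge on the separation $d(C_{n+1},H_n)\geq 1/2^{6(n+1)} > 1/2^{6(n+2)}$, which keeps $H_{n+1}\setminus H_n$ bounded away from $\partial H_n$ and makes the boundaries nest, after which $\partial H_n\cap H=\varnothing$ follows. Your explicit check that $\partial H_1\neq\varnothing$ is a small but welcome addition that the paper leaves implicit.
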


\begin{proof}
Suppose $x \in \partial H_{n}$. Then every neighbourhood of $x$ meets $H_{n}$ and hence, since $H_{n} \subset H_{n+1}$, every neighbourhood of $x$ meets $H_{n+1}$. Since $d(x,H_{n+1}\setminus H_{n})>0$ and every neighbourhood of $x$ meets $\mathbb{R}^2 \setminus H_{n}$ it follows that every neighbourhood of $x$ meets $\mathbb{R}^2\setminus H_{n+1}$. Hence $x \in \partial H_{n+1}$.

If $x \in \partial H_{n}$ for some $n \geq 1$ then $x\notin H_{m}$ for any $m\geq 1$. Consequently $x \in P$.
\end{proof}

\begin{lemma}\label{close}
Suppose $x \notin H_{n}$. Then there exists a point $z \in \partial H_{n}$ such that $\|x-z\|<1/2^{6n-1}$.
\end{lemma}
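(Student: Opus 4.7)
The plan is to exploit two structural facts about $H_n$: first, that each $H_n$ is open (being a union of open balls, built inductively), and second, that $H_n$ is woven around the parallel lines $L_n^k$, which are only $1/2^{6n}$ apart in the $v_n^{\bot}$ direction. Consequently any $x \notin H_n$ is close to some such line, and walking from $x$ in a straight segment toward an appropriate nearby point of $H_n$ must cross $\partial H_n$.

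Concretely, first I would choose the line $L_n^k$ nearest to $x$ and let $w$ be the orthogonal projection of $x$ onto $L_n^k$. Since consecutive lines $L_n^k$ and $L_n^{k+1}$ are at perpendicular distance $1/2^{6n}$, the nearest-line projection satisfies $\|x-w\| \leq 1/2^{6n+1}$. I would then split on whether $w$ survives into $C_n$.

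In the first case, $w \in C_n$. Then $B(w,1/2^{6(n+1)}) \subset H_n$, so in particular $w \in H_n$. Since $x \notin H_n$ and $H_n$ is open, the standard argument (taking $t^* = \inf\{t \in [0,1]: x + t(w-x) \in H_n\}$) produces a point $z = x + t^*(w-x) \in \partial H_n$ with $\|x-z\| < \|x-w\| \leq 1/2^{6n+1} < 1/2^{6n-1}$. In the second case, $w \notin C_n$, so by construction $w \in B(H_{n-1}, 1/2^{6n})$, giving a point $u \in H_{n-1} \subset H_n$ with $\|u-w\| < 1/2^{6n}$. Applying the same walking argument to the segment from $x$ to $u$ yields $z \in \partial H_n$ with
\[
\|x-z\| < \|x-u\| \leq \|x-w\| + \|w-u\| < \frac{1}{2^{6n+1}} + \frac{1}{2^{6n}} = \frac{3}{2^{6n+1}} < \frac{1}{2^{6n-1}}.
\]

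There is no serious obstacle here; the only points requiring care are the verification that each $H_n$ is open (a trivial induction on the definition) and the numerical check that $3/2^{6n+1} < 1/2^{6n-1} = 4/2^{6n+1}$, which controls the worst case B. The generous gap between $1/2^{6n+1}$ (the distance one travels) and the target bound $1/2^{6n-1}$ leaves plenty of room and suggests the constants in the construction were chosen so that this lemma works with room to spare.
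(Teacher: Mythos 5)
Your proof is correct and follows essentially the same route as the paper: project $x$ onto the nearest line $L_n^k$ (distance at most $1/2^{6n+1}$), split on whether that point lies in $C_n$ or in $B(H_{n-1},1/2^{6n})$, and conclude with the same worst-case bound $3/2^{6n+1}<1/2^{6n-1}$. The only cosmetic difference is that where you invoke openness of $H_n$ to cross $\partial H_n$ directly in the second case, the paper splits that case further and reaches $\partial H_{n-1}\subset\partial H_n$ via Lemma \ref{bdry}; both are fine.
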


\begin{proof}
There exists $y \in \bigcup_{k=-\infty}^{\infty} L_{n}^{k}$ with $\|x-y\|\leq 1/2^{6n+1}$. 

If $y\notin B(H_{n-1},1/2^{6n})$ then $B(y,1/2^{6(n+1)})\subset H_{n}$ and there is $z \in \partial H_{n}$ with $\|y-z\|= 1/2^{6(n+1)}$. In particular $z \in \partial H_{n}$ and $\|x-z\|<1/2^{6n}$.

If $y \in B(H_{n-1},1/2^{6n}) \setminus H_{n-1}$ it is clear there exists $z \in \partial H_{n-1}$ with $\|y-z\|< 1/2^{6n}$. Hence, by Lemma \ref{bdry}, $z \in \partial H_{n}$ and $\|x-z\|<1/2^{6n-1}$.

If $y \in H_{n-1}$ then, since $x\notin H_{n-1}$, there is $z \in \partial H_{n-1}\subset \partial H_{n}$ on the line segment joining $x$ and $y$. In this case $\|x-z\|\leq 1/2^{6n+1}$.
\end{proof}

\begin{lemma}\label{thick}
Suppose $x \in \overline{H_{n}}$. Then there exists a point $y \in \mathbb{R}^2$ such that $\|x-y\|\leq 1/2^{6(n+1)}$ and $B(y,1/2^{6(n+1)})\subset H_{n}$.
\end{lemma}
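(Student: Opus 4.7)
The plan is to induct on $n$. The base case $n=0$ is vacuous since $H_0=\varnothing$. For the inductive step, I would start from the decomposition
\[ \overline{H_n} = \overline{H_{n-1}} \cup \overline{\bigcup_{z \in C_n} B(z, 1/2^{6(n+1)})} \]
(valid since closure distributes over a union of two sets) and split according to which of these two closed sets contains $x$.

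If $x$ lies in the closure of the new balls, the first step is to verify that $C_n$ is closed: the family $\{L_n^k\}_k$ is locally finite since the lines are spaced $1/2^{6n}$ apart, so $\bigcup_k L_n^k$ is closed, and subtracting the open set $B(H_{n-1},1/2^{6n})$ preserves closedness. A short limit argument then gives $d(x,C_n)\leq 1/2^{6(n+1)}$, and any $y \in C_n$ realising this distance satisfies $B(y,1/2^{6(n+1)}) \subset H_n$ directly from the definition of $H_n$.

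If instead $x\in\overline{H_{n-1}}$, the inductive hypothesis furnishes $y_0$ with $\|x-y_0\|\leq 1/2^{6n}$ and $B(y_0,1/2^{6n})\subset H_{n-1}\subset H_n$. Since the required radius $1/2^{6(n+1)}$ is much smaller than $1/2^{6n}$, I would slide along the segment from $y_0$ towards $x$: either $y_0$ is already within $1/2^{6(n+1)}$ of $x$, in which case set $y=y_0$, or move to the point $y$ on this segment at distance exactly $1/2^{6(n+1)}$ from $x$. A short triangle-inequality check then yields $B(y,1/2^{6(n+1)}) \subset B(y_0,1/2^{6n}) \subset H_n$.

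Nothing in the argument is deep; the main obstacle is constant bookkeeping in the sliding step. The gap between $1/2^{6(n+1)}$ and $1/2^{6n}$ is precisely what makes room to reposition $y$ near $x$ while keeping the smaller ball inside the larger one supplied by the induction hypothesis.
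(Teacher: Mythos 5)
Your proof is correct. The paper disposes of this lemma in one sentence --- ``$H_n$ is a union of balls of radii at least $1/2^{6(n+1)}$'' --- and your induction is a rigorous unpacking of exactly that observation: the balls added at stage $n$ have radius precisely $1/2^{6(n+1)}$ (your first case, where closedness of $C_n$ lets you take $y$ to be a nearest centre), while the earlier, larger balls are handled by the inductive hypothesis together with the sliding argument, whose bookkeeping ($\|y-y_0\|+1/2^{6(n+1)}\leq\|x-y_0\|\leq 1/2^{6n}$) checks out.
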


\begin{proof}
The lemma follows from the fact $H_{n}$ is a union of balls of radii at least $1/2^{6(n+1)}$.
\end{proof}

We now combine the previous lemmas to show $P$ is lower porous.

\begin{theorem}\label{lp}
The set $P$ is lower porous.
\end{theorem}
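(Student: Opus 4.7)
The plan is to combine Lemmas \ref{close} and \ref{thick} in the obvious way: given $x \in P$ and a scale $r$, pick the appropriate index $n$ so that the $n$th layer of the construction produces both a boundary point of $H_n$ near $x$ and a ball of radius comparable to $r$ sitting inside $H_n$ (hence disjoint from $P$).

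More concretely, I will take the porosity constant to be $\rho = 1/2^{14}$ and some small $r_0$ (say $r_0 = 1/32$). Fix $x \in P$ and $0 < r < r_0$. Let $n \geq 2$ be the unique integer with
\[\frac{1}{2^{6n-2}} \leq r < \frac{1}{2^{6n-8}}.\]
Since $x \in P$ we have $x \notin H_n$, so Lemma \ref{close} furnishes $z_n \in \partial H_n$ with $\|x - z_n\| < 1/2^{6n-1}$. Because $z_n \in \partial H_n \subset \overline{H_n}$, Lemma \ref{thick} yields a point $y_n$ with $\|z_n - y_n\| \leq 1/2^{6(n+1)}$ and $B(y_n, 1/2^{6(n+1)}) \subset H_n \subset H$, so the ball avoids $P$.

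The two remaining verifications are just scale bookkeeping. The triangle inequality gives
\[\|x - y_n\| < \frac{1}{2^{6n-1}} + \frac{1}{2^{6n+6}} < \frac{1}{2^{6n-2}} \leq r,\]
placing $y_n$ in the required ball $B(x,r)$. For the hole size, the choice of $n$ guarantees $r < 1/2^{6n-8}$, whence
\[\rho r = \frac{r}{2^{14}} < \frac{1}{2^{6n+6}} = \frac{1}{2^{6(n+1)}},\]
so $B(y_n, \rho r) \subset B(y_n, 1/2^{6(n+1)}) \subset H$ and therefore $B(y_n, \rho r) \cap P = \varnothing$. Since the constants $\rho$ and $r_0$ are independent of $x$, this proves $P$ is lower porous (in fact uniformly so).

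There is no real obstacle here; the content is entirely in the preceding lemmas, and the only thing to watch is that the triangle inequality bound $1/2^{6n-2}$ (which dominates because the Lemma \ref{close} term $1/2^{6n-1}$ is much larger than the Lemma \ref{thick} term $1/2^{6(n+1)}$) must match $r$, while the hole radius $1/2^{6(n+1)}$ must match $\rho r$. The geometric gap between $1/2^{6n-2}$ and $1/2^{6(n+1)}$ is a fixed factor $2^{-8}$ per step, which is why choosing $n$ in an interval of length $6$ in the exponent produces a uniform $\rho$.
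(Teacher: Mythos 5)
Your proof is correct and follows exactly the paper's approach: combine Lemma \ref{close} and Lemma \ref{thick} to get, for each $n$, a hole of radius $1/2^{6(n+1)}$ centred within $1/2^{6n-2}$ of $x$, then choose $n$ to match the given scale $r$. In fact your final bookkeeping (the explicit window $1/2^{6n-2}\leq r<1/2^{6n-8}$ and the constant $\rho=1/2^{14}$) is more careful than the paper's, which leaves the matching of $r$ to $n$ somewhat informal.
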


\begin{proof}
Let $x \in P$ and $n\geq 1$. Then $x\notin H_{n}$ so by Lemma \ref{close} there exists $y \in \partial H_{n}$ with $\|x-y\|<1/2^{6n-1}$. 

By Lemma \ref{thick} there is $z \in \mathbb{R}^2$ with $\|y-z\|\leq 1/2^{6(n+1)}$ such that $B(z,1/2^{6(n+1)}) \subset H_{n}$.

It follows $\|x-z\|<1/2^{6n-2}$ and $B(z,1/2^{6(n+1)}) \cap P = \varnothing$. Thus for each $n\geq 1$ there is a hole relatively close to $x$ of radius $1/2^{6(n+1)}$.

For sufficiently small $r>0$ there is $n\geq 1$ such that $1/2^{6(n+1)}\leq r \leq 1/2^{6n}$. Hence for any sufficiently small scale we can find a relatively large hole which is close to $x$. It follows $P$ is lower porous at $x$.
\end{proof}

\section{The set $P$ is not $\sigma$-directionally lower porous}

Notice $P$ is a closed set. To show $P$ is not $\sigma$-directionally lower porous it suffices, by Proposition \ref{sufficient}, to show $P$ is not directionally lower porous at points of a dense set. We now construct a dense set consisting of points with useful properties and then show $P$ is not lower porous at such points.

Given $s \geq 1$ define
\[A_{s}= \{ x \in P: \overline{B(x,1/2^{6s+5})}\cap \overline{B(H_{s},1/2^{6(s+1)})}=\varnothing \}.\]

\begin{lemma}\label{antihole}
Suppose $w \in P$ and $n\geq 1$. Then for sufficiently large $s \geq 1$ there exists $z \in A_{s}$ with $\|w-z\|<1/2^{6(n-1)}$.
\end{lemma}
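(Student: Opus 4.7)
The plan is a two-stage construction. First, I locate a ball $B(p, r_0) \subset B(w, 1/2^{6(n-1)})$ whose center $p$ is comfortably far from $H_s$. Second, I extract a point $z \in P$ inside $B(p, r_0)$ by a Cantor-style nested ball construction.

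For the first stage, for $s$ sufficiently large I would produce $p$ with $d(p, H_s) \geq 31/2^{6s+6}$ and then set $r_0 = 27/2^{6s+6}$. The geometric fact is that any two adjacent level-$s$ tubes leave a gap of width $31/2^{6s+5} = 62/2^{6s+6}$ whose midline lies at distance exactly $31/2^{6s+6}$ from both bounding tubes. Although lower-level tubes (at levels $k<s$) may cut such a gap, a counting argument --- the density of level-$k$ crossings of the gap along $v_s$ is at most $2^{6k}$ --- shows that for $s$ large there are long unbroken pieces of the gap fitting inside $B(w, 1/2^{6(n-1)})$. I would take $p$ to be the midpoint of such a piece.

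For the second stage, starting from $B_0 = B(p, r_0)$, I would construct inductively $B_k = B(p_k, r_k) \subset B_{k-1}$ with $B_k \cap H_{s+k} = \varnothing$ for $k \geq 1$, choosing $r_k = 1/2^{6(s+k)+2}$. The set $H_{s+k} \setminus H_{s+k-1}$ is contained in the $1/2^{6(s+k)+5}$-neighborhood of the lines $L_{s+k}^j$ (of spacing $1/2^{6(s+k)}$), so any gap between two consecutive such tubes has width $31/2^{6(s+k)+5} > 2r_k$ and admits a ball of radius $r_k$; choosing $p_k$ within $r_{k-1}-r_k$ of $p_{k-1}$ ensures $B_k \subset B_{k-1}$. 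The limit $z := \lim p_k$ lies in $\bigcap_k B_k \subset B_0$, has $\|z-p\| \leq r_0$, avoids every $H_{s+k}$ for $k \geq 0$, and also avoids $H_j$ for $j < s$ (since $B_0 \cap \overline{H_s} = \varnothing$ and $H_j \subset H_s$), so $z \in P$.

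Finally
\[d(z, H_s) \geq d(p, H_s) - r_0 \geq \tfrac{31}{2^{6s+6}} - \tfrac{27}{2^{6s+6}} = \tfrac{4}{2^{6s+6}} > \tfrac{1}{2^{6s+5}} + \tfrac{1}{2^{6(s+1)}},\]
which gives the disjointness $\overline{B(z, 1/2^{6s+5})} \cap \overline{B(H_s, 1/2^{6(s+1)})} = \varnothing$ and hence $z \in A_s$; also $\|z - w\| \leq \|p - w\| + r_0 < 1/2^{6(n-1)}$ for $s$ large. The main obstacle is the first stage: showing that a cell of $\mathbb{R}^2 \setminus \overline{H_s}$ of size at least of order $1/2^{6s}$ lies inside the small neighborhood $B(w, 1/2^{6(n-1)})$. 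This requires a careful counting of how the lower-level tubes cut the level-$s$ strips, and particular care is needed when some $v_k$ with $k < s$ is parallel or nearly parallel to $v_s$, since then the level-$k$ tubes do not cut the level-$s$ strips transversally and one must instead argue directly with offsets.
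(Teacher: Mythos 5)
Your stage~2 is sound (and can even be shortened: once one has $y$ with $B(y,1/2^{6s+3})\cap H_{s}=\varnothing$, Definition \ref{set} puts a whole ball of $H_{s+1}$ next to $y$, so one may simply take $z\in\partial H_{s+1}\subset P$). The gap is in stage~1, which is the crux, and the ``counting argument'' you invoke there does not work. To keep $p$ at distance $31/2^{6s+6}$ from the level-$k$ tubes, each transversal crossing of your gap's midline by a level-$k$ tube obstructs a length of at least $2/2^{6(k+1)}$ divided by the sine of the crossing angle, and there are about $2^{6k}$ crossings per unit length times that same sine; so level $k$ obstructs a proportion of roughly $1/32$ of the midline, \emph{independently of $k$ and of the angle}. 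Summing over $k=1,\dots,s-1$ the obstructed proportion is about $(s-1)/32$, which exceeds $1$ once $s\geq 33$ --- exactly the regime ``for sufficiently large $s$'' that the lemma requires. So no density or pigeonhole count on the gap's midline can produce the unbroken piece you need; one would have to prove genuine non-alignment of the obstructions across all $s-1$ levels, which is not automatic. Worse, the parallel case you defer is not an edge case: Definition \ref{sequence} forces arbitrarily long runs with $v_{k}=v_{s}$ for $k<s$, and a level-$k$ tube with $k<s$ parallel to $v_{s}$ has thickness $2/2^{6(k+1)}\geq 2/2^{6s}$, larger than the entire level-$s$ gap of width $62/2^{6s+6}$, so it can swallow whole gaps; you give no mechanism for locating a surviving gap inside the fixed ball $B(w,1/2^{6(n-1)})$.

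The missing idea is the separation built into the construction, which your proposal never uses: since $C_{j}$ excludes $B(H_{j-1},1/2^{6j})$, one has $d\bigl(H_{s}\setminus H_{m},\overline{H_{m}}\bigr)\geq 1/2^{6s}-1/2^{6(s+1)}\geq 1/2^{6s+1}$ for every $m<s$. This is what lets the paper avoid any covering estimate: take $x\in\partial H_{m}\setminus\partial H_{m-1}$ with $\|w-x\|<1/2^{6n-1}$ (Lemma \ref{close}), note that near $x$ the set $H_{m}$ sits (essentially) in a half-plane, and step a distance of order $1/2^{6s+2}$ off to the empty side to get $y$ with $B(y,1/2^{6s+3})\cap H_{m}=\varnothing$; the displayed separation then kills all the intermediate levels $m<j\leq s$ at once, with no counting and no case analysis on directions. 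You should replace your stage~1 by this argument (or supply a genuinely multi-scale, Cantor-type intersection argument that does not rely on a union bound and that handles repeated directions); as written, stage~1 is not a proof.
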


\begin{proof}
Since $w \in P$ implies $w\notin H_{n}$ we can find, using Lemma \ref{close}, $x \in \partial H_{n}$ with $\|w-x\|<1/2^{6n-1}$.
Choose $1\leq m \leq n$ such that $x \in \partial H_{m} \setminus \partial H_{m-1}$. By moving $x$ slightly if necessary we may assume that points in $H_{m}$ that are sufficiently close to $x$ lie in a half plane whose boundary meets $x$.

For sufficiently large $s$ we can find $y \in \mathbb{R}^2$ with $\|x-y\|<1/2^{6s+2}$ such that
\[B(y,1/2^{6s+3}) \cap H_{m}=\varnothing.\]
By Definition \ref{set} we see, if $m<s$,
\[d(x,H_{s}\setminus H_{m})\geq 1/2^{6s}-1/2^{6(s+1)} \geq 1/2^{6s+1}.\]
This inequality implies
\[B(y,1/2^{6s+3}) \cap H_{s}=\varnothing.\]
Clearly then
\[B(y,1/2^{6s+4}) \cap \overline{B(H_{s},1/2^{6(s+1)})}=\varnothing.\]
Using Definition \ref{set} we can find $z \in \partial H_{s+1} \subset P$ with $\|y-z\|\leq 1/2^{6(s+1)}$. It follows
\[\overline{B(z,1/2^{6s+5})}\cap \overline{B(H_{s},1/2^{6(s+1)})}=\varnothing\]
so $z \in A_{s}$.

By the triangle inequality we observe $\|w-z\|<1/2^{6(n-1)}$.
\end{proof}

For $v \in D$ and $N\geq 1$, let $G_{v,N}$ be the set of $x$ for which there is $n\geq N$ with $x \in A_{n}$ and $v_{n+1}=\ldots = v_{n+N}=v$.

\begin{proposition}\label{goodset}
For each $v \in D$ and $N\geq 1$ the set $G_{v,N}$ is dense and open in $P$.
\end{proposition}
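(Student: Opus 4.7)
The plan is to handle openness and density separately, with the main inputs being Lemma \ref{antihole} and the ``arbitrarily long strings'' property of $(v_n)$ from Definition \ref{sequence}.

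For openness, I would first show that each $A_n$ is relatively open in $P$, and then observe that $G_{v,N}$ is a union of such sets. The defining condition
\[ \overline{B(x, 1/2^{6n+5})} \cap \overline{B(H_n, 1/2^{6(n+1)})} = \varnothing \]
asserts that two closed sets, one of which is compact, are disjoint; hence they are separated by a strictly positive distance. For any $x' \in P$ sufficiently close to $x$, the slightly translated closed ball $\overline{B(x', 1/2^{6n+5})}$ remains disjoint from $\overline{B(H_n, 1/2^{6(n+1)})}$, so $x' \in A_n$. Thus $A_n$ is relatively open in $P$, and consequently so is
\[ G_{v,N} = \bigcup \{A_n : n \geq N,\ v_{n+1} = \cdots = v_{n+N} = v\}. \]

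For density, fix $w \in P$ and $\varepsilon > 0$. Choose $n_0 \geq 1$ large enough that $1/2^{6(n_0-1)} < \varepsilon$. By Lemma \ref{antihole} applied with this $n_0$ in place of $n$, there exists $s_0 \geq 1$ such that for every $s \geq s_0$ one can find $z \in A_s$ with $\|w-z\| < 1/2^{6(n_0-1)} < \varepsilon$. Applying the string property of Definition \ref{sequence} with $\max(N, s_0)$ in place of $N$, we obtain an index $s \geq \max(s_0, N)$ with $v_{s+1} = \cdots = v_{s + \max(N, s_0)} = v$; in particular $v_{s+1} = \cdots = v_{s+N} = v$. The corresponding $z \in A_s$ then satisfies $z \in G_{v,N}$ and $\|w-z\| < \varepsilon$, so $G_{v,N}$ is dense in $P$.

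The main obstacle, such as it is, lies in coordinating the two layers of ``sufficiently large'' in the density step: one must first commit to the target accuracy $1/2^{6(n_0-1)}$ so that Lemma \ref{antihole} supplies a threshold $s_0$, and only then invoke the string property to select a single index $s$ exceeding both $s_0$ and $N$ while simultaneously carrying the required string. Openness is essentially formal once one unpacks the closed-ball condition defining $A_n$ as a positive-distance condition.
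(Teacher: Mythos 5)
Your proposal is correct and follows essentially the same route as the paper: openness of each $A_n$ (which the paper asserts and you justify via the positive distance between a compact set and a disjoint closed set) plus density via Lemma \ref{antihole} combined with the string property of Definition \ref{sequence}. Your explicit coordination of the two thresholds $s_0$ and $N$ in the density step is exactly the detail the paper leaves implicit.
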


\begin{proof}
For each $n\geq 1$ the set $A_{n}$ is open; hence $G_{v,N}$, as a union of open sets, is open. That $G_{v,N}$ is dense follows from Lemma \ref{antihole} because, by Definition \ref{sequence}, there is arbitrarily large $n\geq N$ such that $v_{n+1}=\ldots = v_{n+N}=v$.
\end{proof}

\begin{proposition}\label{ndlp}
Suppose $v \in D$. The set $P$ is not directionally lower porous in direction $v$ at points of the set $\bigcap_{N=1}^{\infty} G_{v,N}$.
\end{proposition}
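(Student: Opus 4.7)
The plan is: fix $x \in \bigcap_{N=1}^{\infty} G_{v,N}$ and suppose for contradiction that $P$ is lower porous at $x$ in direction $v$ with constants $\rho, r_0 > 0$. Choose $N$ so large that $1/2^{6(N-1)} < \rho$, and using $x \in G_{v,N}$ pick $n\geq N$ with $x\in A_n$, $v_{n+1}=\cdots=v_{n+N}=v$, and $r:=1/2^{6(n+1)}<r_0$. I will show that at this single scale $r$ every candidate hole $B(x+tv,\rho r)$ with $|t|<r$ meets $P$, which contradicts lower porosity.

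Work in coordinates with $v=(1,0)$ and $x=(x_0,y_0)$. The property $x\in A_n$ makes $B(x,1/2^{6n+5})$ disjoint from $B(H_n,1/2^{6(n+1)})$, so inside this ball the set $H_{n+N}$ is built solely from the tubes added at steps $n+1,\ldots,n+N$, each of them a horizontal strip in the direction $v$. The exclusion rule $C_m=\bigcup_k L_m^k\setminus B(H_{m-1},1/2^{6m})$ forces an all-or-nothing behaviour inside this clean ball: for each $m\in(n,n+N]$ and each $k$, either the central line $L_m^k$ lies everywhere within $B(H_{m-1},1/2^{6m})$ and the step-$m$ tube around it is absent, or it is at perpendicular distance at least $1/2^{6m}$ from every previous tube and produces a tube crossing the whole ball as a full horizontal strip, both of whose boundary lines at perpendicular coordinate $k/2^{6m}\pm 1/2^{6(m+1)}$ lie in $\partial H_{n+N}\subset P$ by Lemma~\ref{bdry}.

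Let $\tilde V\subset\mathbb{R}$ be the union of perpendicular ranges of all tubes at steps $n+1,\ldots,n+N$ actually present; since $x\in P$ cannot lie in any full-strip tube, $y_0\notin\tilde V$. The core estimate is that every connected component of $\mathbb{R}\setminus\tilde V$ has length at most $2/2^{6(n+N)}$: by induction over $m$, any such component of length $\geq 2/2^{6m}$ at stage $m-1$ is subdivided by those central lines $L_m^k$ sitting inside it at distance $\geq 1/2^{6m}$ from both endpoints, producing subcomponents bounded by interior gaps of length $1/2^{6m}-2/2^{6(m+1)}$ and edge gaps of length $<2/2^{6m}$, each strictly below $2/2^{6m}$; shorter components survive intact. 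Hence $y_0$ lies within $1/2^{6(n+N)}$ of some endpoint $y_1=k/2^{6m}\pm 1/2^{6(m+1)}$ with $m\leq n+N$ and the corresponding step-$m$ tube present, and the choice of $N$ gives $|y_1-y_0|\leq 1/2^{6(n+N)}<\rho/2^{6(n+1)}=\rho r$. The horizontal line at $y_1$ crosses the whole ball $B(x,1/2^{6n+5})$ and lies in $P$, so $(x_0+t,y_1)\in P\cap B(x+tv,\rho r)$ for every $|t|<r$ --- the desired contradiction.

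The main obstacle is precisely the combined verification of the all-or-nothing tube behaviour inside the clean ball and the one-dimensional gap-width induction on the $v^\perp$-axis; once both are secured, the rest is routine arithmetic.
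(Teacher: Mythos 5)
Your proposal is correct in essence and shares the paper's central mechanism: use $x\in A_{n}$ to obtain a region around $x$ clean of $H_{n}$, use the repetition $v_{n+1}=\cdots=v_{n+N}=v$ to show that inside this region $H_{n+N}$ is a union of well-separated strips parallel to $v$, and conclude that every candidate hole $B(x+tv,\rho r)$ at the single scale $r=1/2^{6(n+1)}$ meets $P$. The execution differs in two respects. Where the paper works in the small rectangle $R$ and proves a two-part inductive claim (invariance of $H_{m}\cap R$ under translation in direction $v$, plus a quantitative bound keeping the external part of $H_{m}$ away from $R$), you work in the whole clean ball and substitute an ``all-or-nothing'' strip dichotomy together with a one-dimensional gap-width induction on the $v^{\perp}$-axis. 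And where the paper shows $x+tv\notin H_{n+N}$ and invokes Lemma \ref{close} to produce nearby points of $P$, you exhibit an entire line of $\partial H_{n+N}\subset P$ parallel to $v$ at perpendicular distance about $1/2^{6(n+N)}$ from $x$; both give the same contradiction (you fix $N$ via $\rho$ up front, the paper lets $N\to\infty$). Two points in your version need care. The dichotomy is not literally true throughout the clean ball: a tube from an earlier step $j\in(n,m)$ may terminate just outside the ball, and the rule $d(\cdot,H_{m-1})<1/2^{6m}$ can then exclude part, but not all, of a segment of $L_{m}^{k}$ near the ball's edge. This contamination penetrates only to depth of order $1/2^{6m}+1/2^{6(j+1)}$, so the dichotomy holds on a slightly shrunken ball that still contains everything you use; this buffer bookkeeping is precisely what the second bullet of the paper's claim records, and it must be made explicit rather than asserted. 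Also, the gap bound $2/2^{6(n+N)}$ is slightly optimistic: a component of length between $2/2^{6m}$ and $3/2^{6m}$ need not contain a lattice point $k/2^{6m}$ at distance at least $1/2^{6m}$ from both endpoints and can survive stage $m$ unsubdivided, so the correct inductive bound is $3/2^{6m}$ --- harmless, since increasing $N$ by one absorbs the constant.
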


\begin{proof}
To simplify notation suppose, without loss of generality, $v=(0,1)$. The proof for the general case is the same up to a rotation.

Suppose $x \in \bigcap_{N=1}^{\infty} G_{v,N}$ and $P$ is directionally lower porous at $x$ in direction $v$. 

By Definition \ref{def} there exists $\rho>0$ and $r_{0}>0$ such that for every $0<r<r_{0}$ there exists $-r<t<r$ such that
\begin{equation}
B(x+tv,\rho r)\cap P=\varnothing.
\label{rho}
\end{equation}

Choose $N$ such that $1/2^{6N}<r_{0}$. Then, as $x \in G_{v,N}$, there exists $n\geq N$ such that $v_{n+1}=\ldots = v_{n+N}=v$ and $x \in A_{n}$. 

We now show if $t$ is relatively small then $x+tv\notin H_{n+N}$ and hence there are points of $P$ close to $x+tv$. To do this we analyse the structure of $H_{n+N}$ close to $x$.

Choose $s_{1}<0<s_{2}$, with $|s_{1}|$ and $|s_{2}|$ minimal, such that
\[x+(s_{1},0) \in L_{n+1}^{k_{1}}\]
and
\[x+(s_{2},0) \in L_{n+1}^{k_{2}}\]
for some $k_{1},k_{2} \in \mathbb{Z}$. 

Notice, by Definition \ref{set}, it follows $|s_{1}|, |s_{2}|\leq 1/2^{6(n+1)}$. 

Hence if
\[R=[x_{1}+s_{1},x_{1}+s_{2}]\times [x_{2}-1/2^{6(n+1)},x_{2}+1/2^{6(n+1)}]\]
then
\[R \subset B(x,1/2^{6n+5})\]
so, since $x \in A_{n}$,
\[R \cap \overline{B(H_{n},1/2^{6(n+1)})}=\varnothing.\]

The following claim gives information about $H_{n+N}$ which we will need to finish the proof.

\begin{claim}
Fix $n+1\leq m\leq n+N$. Then the following statements hold:
\begin{itemize}
	\item If $y, y+tv \in R$ then $y \in H_{m}$ if and only if $y+tv \in H_{m}$.
	\item If $z \in H_{m}\setminus R$ and $z+tv \in R\setminus H_{m}$ for some $t \in \mathbb{R}$ then
	\[d(z,R)\geq 1/2^{6m}-1/2^{6(m+1)}.\]
\end{itemize}
\end{claim}

\begin{proof}[Proof of claim]
Notice, by Definition \ref{set} and the definition of $R$, that if $y \in H_{n+N}$ with $x_{1}+s_{1} \leq y_{1} \leq x_{1}+s_{2}$ then $y \in B(z,1/2^{6(m+1)})$ for some $n+1\leq m \leq n+N$ and $z\in C_{m}$ on a line $L_{m}^{k}$ that meets $R$. Thus it suffices to analyse which points on lines $L_{m}^{k}$ meeting $R$ lie in $C_{m}$.

We prove the claim by induction with respect to $m$. 

Let $m=n+1$. In this case, by the definition of $R$,
\[H_{m}\cap R=\left(B(L_{n+1}^{k_{1}},1/2^{6(n+2)}) \cup B(L_{n+1}^{k_{2}},1/2^{6(n+2)})\right) \cap R\]
so the first part of the claim is clearly true. It follows from Definition \ref{set} and the definition of $R$ that there are no points $z \in H_{n+1}\setminus H_{n}$ satisfying the assumptions in the second part of the claim. Since
\[R \cap \overline{B(H_{n},1/2^{6(n+1)})}=\varnothing\]
it follows the second part of the claim holds for $m=n+1$.

Suppose the claim holds for some $n+1\leq m\leq n+N-1$; we show it holds with $m$ replaced by $m+1$. First notice $v_{m+1}=v$. Suppose $x+(s,0) \in L_{m+1}^{k}$ for some $s_{1}<s<s_{2}$ and $k \in \mathbb{Z}$. 

If $x+(s,0) \in \overline{H_{m}}$ then, by the first part of the claim for $m$, $x+(s,t) \in \overline{H_{m}}$ for all $|t|\leq 1/2^{6(n+1)}$. Hence if $x+(s,t) \in C_{m+1}$ then
\[|t|\geq 1/2^{6(n+1)}+1/2^{6(m+1)}.\]
Consequently if $z \in B(x+(s,t),1/2^{6(m+2)}) \subset H_{m+1}$ then
\[d(z,R)\geq 1/2^{6(m+1)}-1/2^{6(m+2)}.\]

On the other hand, suppose $x+(s,0) \notin \overline{H_{m}}$. Then it follows from the fact $x+(s,0)\in L_{m+1}^{k}$ and Definition \ref{set} that
\[d(x+(s,0),H_{m})\geq 1/2^{6(m+1)}.\]
Further, using both parts of the claim for $m$,
\[d(x+(s,t),H_{m})\geq 1/2^{6(m+1)}\]
for all $|t|\leq 1/2^{6(n+1)}$. Hence $x+(s,t) \in C_{m+1}$ and
\[B(x+(s,t),1/2^{6(m+2)})\subset H_{m+1}\]
for all $|t|\leq 1/2^{6(n+1)}$. 

The above analysis of $H_{m+1}\setminus H_{m}$, together with the validity of the claim for $m$, implies the claim holds for $m+1$.

Hence by induction the claim holds for all $n+1\leq m \leq n+N$.
\end{proof}

Suppose $|t|\leq 1/2^{6(n+1)}$. Then, since $x\notin H_{n+N}$, applying the claim with $m=n+N$ gives $x+tv\notin H_{n+N}$. Applying Lemma \ref{close} shows there is $y \in P$ such that
\[\|x+tv-y\|<1/2^{6(n+N)-1}.\]
That is,
\[B(x+tv,1/2^{6(n+N)-1})\cap P \neq \varnothing.\]
Hence, comparing this with equation (\ref{rho}),
\[\rho /2^{6(n+1)} \leq  1/2^{6(n+N)-1}.\]
Since $N$ could be chosen to be arbitrarily large this contradicts the assumption $\rho>0$. Hence $P$ cannot be directionally lower porous at $x$ in the direction $v$.
\end{proof}

\begin{theorem}
The set $P$ is not $\sigma$-directionally lower porous.
\end{theorem}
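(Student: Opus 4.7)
The plan is to apply Proposition \ref{sufficient} to the closed set $F=P$. Note $P$ is indeed closed because each $H_{n}$ is a union of open balls, so $H=\bigcup_{n} H_{n}$ is open and $P=\mathbb{R}^{2}\setminus H$ is closed. As a closed subset of $\mathbb{R}^{2}$, $P$ is a complete metric space, so the Baire category theorem applies inside $P$.

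I would then produce the required dense subset of $P$ by setting
\[ A=\bigcap_{v\in D}\bigcap_{N=1}^{\infty} G_{v,N}. \]
Since $D$ is countable and, by Proposition \ref{goodset}, each $G_{v,N}$ is open and dense in $P$, Baire's theorem ensures $A$ is a dense $G_{\delta}$ in $P$. By Proposition \ref{ndlp}, $P$ is not directionally lower porous in direction $v$ at any point of $A$ for any $v\in D$.

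The key remaining step is to promote non-porosity in the countable set of directions $D$ to non-porosity in \emph{every} direction. For this I use a perturbation argument: suppose $P$ is lower porous at some $x\in A$ in a direction $w\in S^{1}$ with parameters $\rho>0$ and $r_{0}>0$. Pick $v\in D$ with $\|v-w\|<\rho/2$; this is possible since $D$ is dense in $S^{1}$. For any $0<r<r_{0}$ and $|t|<r$, the triangle inequality gives $\|(x+tv)-(x+tw)\|\le r\|v-w\|<r\rho/2$, so
\[ B(x+tv,\,\tfrac{\rho}{2}r)\subset B(x+tw,\rho r). \]
Applying this inclusion to the witness $t$ provided by the definition in direction $w$ at scale $r$, one sees $P$ is lower porous at $x$ in direction $v$ with parameter $\rho/2>0$, contradicting $x\in G_{v,N}$ for all $N$ via Proposition \ref{ndlp}.

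Combining these ingredients, $A$ is dense in $P$ and $P$ is not directionally lower porous at any point of $A$; Proposition \ref{sufficient} then yields the conclusion. The one genuinely non-bookkeeping step is the directional perturbation argument in the previous paragraph; the rest is a routine application of Baire category plus the results established earlier in the excerpt, so I expect no serious obstacle.
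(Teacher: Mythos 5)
Your proposal is correct and follows the paper's argument exactly: Baire's theorem applied to the dense open sets $G_{v,N}$ inside the closed set $P$, Proposition \ref{sufficient}, and Proposition \ref{ndlp}. The only difference is that you spell out the directional perturbation step (choosing $v\in D$ with $\|v-w\|<\rho/2$ and shrinking the hole radius to $\rho r/2$), which the paper dismisses as ``an easy approximation argument''; your version of that step is valid.
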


\begin{proof}
The set
\[G=\bigcap_{v \in D} \bigcap_{N=1}^{\infty} G_{v,N}\]
is, by Proposition \ref{goodset}, a countable intersection of dense open subsets of the closed set $P$. Hence, by Baire's theorem, $G$ is dense in $P$ so it suffices, by Proposition \ref{sufficient}, to see $P$ is not directionally lower porous at any point of $G$.

Suppose $x \in G$. Then, by Proposition \ref{ndlp}, $P$ is not directionally lower porous at $x$ in any direction belonging to $D$. Since $D$ is a dense set of directions it follows, by an easy approximation argument, that $P$ is not directionally lower porous at $x$ in any direction.
\end{proof}

\end{document}